\documentclass[10pt]{article}

\usepackage{hyperref}
\usepackage{xcolor}
\usepackage{amssymb}
\usepackage{amsfonts}
\usepackage{amsthm}
\usepackage{amsmath}
\usepackage{float}
\usepackage{bigints}
\usepackage{fullpage}
\usepackage{mathtools}
\usepackage[utf8]{inputenc} 
\usepackage{cancel}
\usepackage{verbatim}

\newtheorem{theorem}{Theorem}

\newtheorem{lemma}[theorem]{Lemma}

\newtheorem{definition}{Definition}

\newtheorem{corollary}[theorem]{Corollary}
\newtheorem{remark}[theorem]{Remark}

\newcommand{\inter}{\rm int}

\newcommand{\x}{\mathbf{x}}
\newcommand{\p}{\mathbf{p}}
\newcommand{\q}{\mathbf{q}}

\newcommand{\oo}{\mathbf{o}}

\newcommand{\B}{\mathbf{B}}

\newcommand{\A}{\mathbf{A}}
\newcommand{\K}{\mathbf{K}}

\newcommand{\Ee}{\mathbb{E}}
\newcommand{\Hh}{\mathbb{H}}
\newcommand{\Ss}{\mathbb{S}}
\newcommand{\Mm}{\mathbb{M}}

\newcommand{\Ed}{\Ee^d}
\newcommand{\Sd}{\Ss^d}
\newcommand{\Hd}{\Hh^d}

\newcommand{\iprod}[2]{\left<#1,#2\right>}

\newcommand{\noshow}[1]{}
\newcommand{\ivol}[2][k]{{\rm V}_{#1}\left(#2\right)}
\newcommand{\kind}{k\in[d]}

\title{On the intrinsic volumes of intersections of congruent balls
\footnote{Keywords and phrases: Kneser-Poulsen conjecture, Gromov-Klee-Wagon problem, 
(intrinsic) volume of intersections of congruent balls, $r$-ball body, uniform contraction, Euclidean space. \newline \hspace*{.35cm} 2010 Mathematics Subject Classification: 52A20, 52A22.}}

\author{K\'{a}roly Bezdek\thanks{Partially supported by a Natural Sciences and 
Engineering Research Council of Canada Discovery Grant.}
}

\date{}

\begin{document}

\maketitle

\begin{abstract}
Let $\Ee^d$ denote the $d$-dimensional Euclidean space. The $r$-ball body generated by a given set in $\Ee^d$ is the intersection of balls of radius $r$ centered at the points of the given set. In this paper we prove the following Blaschke-Santal\'o-type inequalities for $r$-ball bodies: for all $1\leq k\leq d$ and for any set of given volume in $\Ee^d$ the $k$-th intrinsic volume of the $r$-ball body generated by the set becomes maximal if the set is a ball.
As an application we investigate the Gromov-Klee-Wagon problem for congruent balls in $\Ee^d$, which is a question on proving or disproving that if the centers of a family of $N$ congruent balls in $\Ee^d$ are contracted, then the volume of the intersection does not decrease. In particular, we investigate this problem for uniform contractions, which are contractions where all the pairwise distances in the first set of centers are larger than all the pairwise distances in the second set of centers, that is, when the pairwise distances of the two sets are separated by some positive real number. The author and M. Nasz\'odi [Discrete Comput. Geom. 60/4 (2018), 967-980] proved that the intrinsic volumes of the intersection of $N$ congruent balls in $\Ee^d$, $d>1$ increase under any uniform contraction of the center points when $ N\geq \left(1+\sqrt{2}\right)^d$. We give a short proof of this result using the Blaschke-Santal\'o-type inequalities of $r$-ball bodies and improve it for $d\geq 42$.
\end{abstract}

\section{Introduction}\label{sec:intro}

We denote the Euclidean norm of a vector $\p$ in the $d$-dimensional Euclidean space $\Ee^d$ by $|\p|:=\sqrt{\iprod{\p}{\p}}$, where $\iprod{\cdot}{\cdot}$ is the 
standard inner product. For a positive integer $N$, we use $[N]:=\{1,2,\ldots,N\}$. Let $A\subset\Ed$ be a compact convex set, and 
$\kind$. We denote the $k$-th intrinsic volume of $A$ by $\ivol{A}$. It is well known that $\ivol[d]{A}$ is the $d$-dimensional 
volume of $A$, $2\ivol[d-1]{A}$ is the surface area of $A$, and $\frac{2\omega_{d-1}}{d\omega_d}\ivol[1]{A}$ is equal to the mean width of $A$, where $\omega_d$ stands for the volume of a $d$-dimensional unit ball, that is, $\omega_d=\frac{\pi^{\frac{d}{2}}}{\Gamma(1+\frac{d}{2})}$. (For a focused overview on intrinsic volumes see \cite{GHSch}). In this paper, for simplicity $\ivol{\emptyset}=0$ for all $\kind$. The closed Euclidean ball of radius $r$ centered at $\p\in\Ed$ is denoted by $\B^d[\p,r]:=\{\q\in\Ed\ |\  |\p-\q|\leq r\}$. Now, we are ready to introduce the central notion of this paper.

\begin{definition}\label{r-dual-body}
For a set $\emptyset\neq X\subseteq\Ee^d$, $d>1$ and $r>0$ let the {\rm $r$-ball body} $X^r$ generated by $X$ be defined by $X^r:=\bigcap_{\x\in X}\B^d[\x, r]$. 
\end{definition}

We note that either $X^r=\emptyset$, or $X^r$ is a point, or ${\inter} (X^r)\neq\emptyset$. Perhaps not surprisingly, $r$-ball bodies of $\Ee^d$ have already been investigated in a number of papers however, under various names such as ``\"uberkonvexe Menge'' (\cite{Ma}), ``$r$-convex domain'' (\cite{Fe}), ``spindle convex set'' (\cite{BLNP}, \cite{KMP}), ``ball convex set'' (\cite{LNT}), ``hyperconvex set'' ({\cite{FKV}), and ``$r$-dual set'' (\cite{Be18}). $r$-ball bodies satisfy some basic identities such as $\left((X^r)^r\right))^r=X^r$ and $(X \cup Y)^r=X^r\cap Y^r$, which hold for any $X\subseteq\Ee^d$ and $Y\subseteq\Ee^d$. Clearly, also monotonicity holds namely, $X\subseteq Y\subseteq\Mm^d$ implies $Y^r\subseteq X^r$. 
In this paper we investigate volumetric relations between $X^r$ and $X$ in $\Ee^d$. First, recall the recent theorem of Gao, Hug, and Schneider \cite{GHSch} stating that for any convex body of given volume in $\Ss^d$ the volume of the spherical polar body becomes maximal if the convex body is a ball. The author has proved the following Euclidean analogue of their theorem in \cite{Be18}. Let $\A\subset\Ee^d$, $d>1$ be a compact set of volume $V_{d}(\A)>0$ and $r>0$. If $\B\subseteq\Ee^d$ is a ball with $V_{d}(\A)=V_{d}(\B)$, then 
\begin{equation}\label{Bezdek-inequality}
V_{d}(\A^r)\leq V_{d}(\B^r).
\end{equation} 
As the theorem of Gao, Hug, and Schneider \cite{GHSch} is often called a spherical counterpart of the Blaschke--Santal\'o inequality, one may refer to the Euclidean analogue (\ref{Bezdek-inequality}) of their theorem as a Blaschke--Santal\'o-type inequality for $r$-ball bodies in $\Ee^d$. As a first result, we extend (\ref{Bezdek-inequality}) to intrinsic volumes by proving the following Blaschke--Santal\'o-type inequalities of $r$-ball bodies in $\Ee^d$. 

\begin{theorem}\label{Bezdek-inequality-extended}
Let $\A\subset\Ee^d$, $d>1$ be a compact set of volume $V_{d}(\A)>0$ and $r>0$. If $\B\subset\Ee^d$ is a ball with $V_{d}(\A)=V_{d}(\B)$, then 
\begin{equation}\label{Bezdek-inequality-generalized}
V_{k}(\A^r)\leq V_{k}(\B^r)
\end{equation}
holds for all $\kind$.
\end{theorem}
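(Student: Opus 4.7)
The plan is to combine the Kubota projection formula with the known $k$-dimensional Blaschke--Santal\'o-type inequality (\ref{Bezdek-inequality}) to reduce the general statement to a single integral inequality on the Grassmannian.

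First, I reduce to the case where $\A$ is a convex body. Since a Euclidean ball is convex, $\B^d[\p,r]\supseteq\A$ if and only if $\B^d[\p,r]\supseteq\conv\A$, so $\A^r=(\conv\A)^r$. Passing from $\A$ to $\conv\A$ only increases $V_d$, so the comparison ball $\B'$ for $\conv\A$ is at least as large as $\B$; antimonotonicity of the $r$-ball body gives $(\B')^r\subseteq\B^r$, hence $V_k((\B')^r)\leq V_k(\B^r)$, and the theorem for $\conv\A$ implies the theorem for $\A$.

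Second, I invoke Kubota's formula
\[
V_k(\A^r)=\binom{d}{k}\frac{\omega_d}{\omega_k\omega_{d-k}}\int_{G(d,k)}\vol{\pi_L\A^r}\,d\mu(L),
\]
where $\mu$ is the invariant probability measure on the Grassmannian $G(d,k)$ and $\pi_L$ is orthogonal projection onto $L$. Because the orthogonal projection of a Euclidean ball of radius $r$ onto $L\cong\Ee^k$ is a $k$-dimensional Euclidean ball of radius $r$, the inclusion
\[
\pi_L\A^r\subseteq\bigcap_{\x\in\A}\pi_L\B^d[\x,r]=\bigcap_{\x\in\A}\B^k[\pi_L\x,r]=(\pi_L\A)^r
\]
holds inside $L$. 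Applying the $k$-dimensional case of~(\ref{Bezdek-inequality}) to the compact set $\pi_L\A\subset L$ yields the pointwise bound
\[
\vol{\pi_L\A^r}\leq\vol{(\pi_L\A)^r}\leq\omega_k(r-s_L)^k,\qquad s_L:=(\vol{\pi_L\A}/\omega_k)^{1/k}.
\]

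Third, substituting this back into Kubota and using that the projections of the ball $\B^r$ are all congruent $k$-balls of radius $r-s$ with $s=(V_d(\A)/\omega_d)^{1/d}$---so that $V_k(\B^r)=\binom{d}{k}\frac{\omega_d}{\omega_{d-k}}(r-s)^k$---the inequality $V_k(\A^r)\leq V_k(\B^r)$ reduces to showing
\[
\int_{G(d,k)}(r-s_L)^k\,d\mu(L)\leq(r-s)^k.
\]
This last step is the main obstacle I foresee. Kubota applied to $\A$ together with the Alexandrov--Fenchel / quermassintegral chain (which gives $V_k(\A)\geq V_k(\B)$ for convex $\A$) provides the Uryson-type moment bound $\int_{G(d,k)}s_L^k\,d\mu(L)\geq s^k$, but $x\mapsto(r-x^{1/k})^k$ is convex and decreasing in $x$, so naive Jensen's inequality points in the wrong direction. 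Overcoming this appears to require exploiting finer distributional information on $s_L$---for instance via the layer-cake rewriting $(r-s_L)^k=\int_0^r k(r-t)^{k-1}\mathbf{1}[s_L\leq t]\,dt$, which reduces matters to a tail estimate on the Grassmannian measure $\mu(\{L:s_L\leq t\})$ in terms of $t$ and $s$---or, alternatively, sidestepping the projection step altogether by a Steiner- or Schwarz-type symmetrization applied directly to $\A$ that is shown to weakly monotonize $V_k(\A^r)$ along a sequence converging to $\B$.
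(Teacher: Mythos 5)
Your reduction to convex (indeed, even to $r$-ball convex) generators and your use of Kubota's formula are fine as far as they go, but the argument is not a proof: the whole weight of the theorem ends up resting on the Grassmannian inequality $\int_{G(d,k)}(r-s_L)_+^k\,d\mu(L)\leq(r-s)^k$, which you do not establish and which you yourself identify as the obstacle. The information you have available at that point runs in the wrong direction: projecting the intersection only gives $\pi_L(\A^r)\subseteq(\pi_L\A)^r$, i.e.\ an upper bound on each slice, while the quermassintegral comparison gives only the moment bound $\int s_L^k\,d\mu\geq s^k$, and since $x\mapsto(r-x^{1/k})_+^k$ is convex and decreasing Jensen cannot close the loop. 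It is also not clear that the proposed integral inequality is even true for all convex $\A$ of the given volume (it is strictly stronger than what the theorem requires, because your Kubota chain has already given away volume at the projection step), so the two suggested rescue routes (a tail estimate for $\mu(\{L: s_L\leq t\})$, or a symmetrization that monotonizes $V_k(\A^r)$) are genuine open work, not finishing touches. A further small point: you apply (\ref{Bezdek-inequality}) inside a $k$-plane, and that result as quoted requires dimension $>1$, so $k=1$ would need a separate (easy) argument.

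For comparison, the paper avoids the Grassmannian altogether by proving only the case $k=1$ and bootstrapping: the Alexandrov--Fenchel consequence (\ref{A-F-inequality}) says that if a convex body has first intrinsic volume at most that of a ball $\B^d[\oo,R]$, then all higher intrinsic volumes are dominated as well, so it suffices to bound the mean width of $\A^r$. After replacing $\A$ by its $r$-ball convex hull (Lemma~\ref{basic2}), Capoyleas' theorem gives that $\A+\A^r$ has constant width $2r$, whence the exact identity $V_1(\A)+V_1(\A^r)=\frac{d\omega_d}{\omega_{d-1}}r=V_1(\B)+V_1(\B^r)$; combined with the Urysohn-type inequality $V_1(\A)\geq V_1(\B)$ for bodies of equal volume, this yields $V_1(\A^r)\leq V_1(\B^r)$ immediately. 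This is exactly the kind of ``finer information'' your averaged projection bound lacks: for $k=1$ the problem linearizes (mean width is Minkowski additive), so the constant-width identity replaces your truncated moment estimate by an equality, and the case of general $k$ is then free. If you want to salvage your outline, the most promising modification is to restrict it to $k=1$ and then invoke (\ref{A-F-inequality}) as the paper does, rather than trying to prove the inequality for every $k$ directly through Kubota.
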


The author thanks to one of the referees for noting that Theorem~\ref{Bezdek-inequality-extended} follows from a stochastic version proved by G. Paouris and P. Pivovarov (see Theorem 3.1 in \cite{PaPi}).

As a second result, we discuss the following application of Theorem~\ref{Bezdek-inequality-extended}. We say that the (labeled) point set $Q:=\{\q_1,\dots ,\q_N\}\subset\Ee^d$ is a {\it contraction} of the (labeled) point set $P:=\{\p_1,\dots ,\p_N\}\subset\Ee^d$ in $\Ee^d$, $d>1$ if  $|\q_i-\q_j|\leq |\p_i-\p_j|$ holds for all $1\leq i<j\leq N$. In 1955, M. Kneser \cite{Kn55} and E. T. Poulsen \cite{Po54} independently stated the conjecture that if $Q=\{\q_1,\ldots,\q_N\}$ is a contraction of $P=\{\p_1,\ldots, \p_N\}$ in $\Ed$, $d>1$, then 
\begin{equation}\label{K-P}
\ivol[d]{\bigcup_{i=1}^{N}\mathbf{B}^d[\p_i, r]}\ge  \ivol[d]{\bigcup_{i=1}^{N}\mathbf{B}^d[\q_i, r]}
\end{equation}
holds for all $N>1$ and $r>0$. It is customary to assign also the following related conjecture to M. Kneser and E. T. Poulsen. If $Q=\{\q_1,\ldots,\q_N\}$ is a contraction of $P=\{\p_1,\ldots, \p_N\}$ in $\Ed$, $d>1$, then 
\begin{equation}\label{G-K-W}
V_{d}(P^{r})=\ivol[d]{\bigcap_{i=1}^{N}\mathbf{B}^d[\p_i, r]}\le  \ivol[d]{\bigcap_{i=1}^{N}\mathbf{B}^d[\q_i, r]}=V_{d}(Q^{r})
\end{equation}
holds for all $N>1$ and $r>0$. However, a closer look of the relevant literature reveals that the question on proving (\ref{G-K-W}) has been raised by the following people in a somewhat less straightforward way. First, in 1979 V. Klee \cite{Kl} asked whether (\ref{G-K-W}) holds in $\Ee^2$. Then in 1987, M. Gromov \cite{Gr87} published a proof of (\ref{G-K-W}) for all $N\leq d+1$ and for not necessarily congruent balls and conjectured that his result extends to spherical $d$-space $\Sd$ (resp., hyperbolic $d$-space $\Hd$) for all $d>1$. Finally, in 1991 V. Klee and S. Wagon \cite{KlWa} asked whether (\ref{G-K-W}) holds for not necessarily congruent balls as well. (We note that in \cite{KlWa} the {\it Kneser-Poulsen conjecture} under (\ref{K-P}) is stated in its most general form, that is, for not necessarily congruent balls.) Thus, it would be proper to refer to (\ref{G-K-W}) as a special case of the {\it Gromov-Klee-Wagon problem}, which is about proving or disproving the monotonicity of the volume of intersection of not necessarily congruent balls under arbitrary contractions of their center points in $\Ee^d$, $\Ss^d$, and $\Hh^d$ for $d>1$. In any case, the author jointly with R. Connelly \cite{BeCo} confirmed (\ref{K-P}) as well as (\ref{G-K-W}) for not necessarily congruent balls when $N\leq d+3$ generalizing the relevant result of M. Gromov \cite{Gr87} for $N\leq d+1$. On the other hand, the author and R. Connelly \cite{BeCo} proved (\ref{K-P}) as well as (\ref{G-K-W}) for $N$ not necessarily congruent circular disks and for all $N>1$ in $\Ee^2$. Very recently B. Csik\'os and M. Horv\'ath \cite{CsHo} (resp., I. Gorbovickis \cite{Go}) gave a positive answer to the Gromov-Klee-Wagon problem in $\Hh^2$ (resp., $\Ss^2$ for circular disks having radii at most $\frac{\pi}{2}$). However, both  (\ref{K-P}) and (\ref{G-K-W}) remain open in $\Ed$ for all $d\geq 3$. Just very recently the author and M. Nasz\'odi \cite{BeNa}  investigated the Kneser-Poulsen conjecture and the Gromov-Klee-Wagon problem for congruent balls and for uniform contractions in $\Ed$. Following P. Pivovarov (\cite{BeNa}) we say that the (labeled) point set $Q:=\{\q_1,\dots ,\q_N\}\subset\Ee^d$ is a {\it uniform contraction} of the (labeled) point set $P:=\{\p_1,\dots ,\p_N\}\subset\Ee^d$ {\it with separating value} $\lambda>0$ in $\Ee^d$, $d>1$ if $|\q_i-\q_j|\leq\lambda\leq|\p_i-\p_j|$
holds for all $1\leq i<j\leq N$. Theorem 1.4 of \cite{BeNa} proves (\ref{G-K-W}) as well as its extension to intrinsic volumes for all uniform contractions in $\Ee^d$, $d>1$ under the condition that $ N\geq \left(1+\sqrt{2}\right)^d$. We give a short proof of this result using Theorem~\ref{Bezdek-inequality-extended} and improve it for $d\geq 42$ as follows.

\begin{theorem}\label{main-application}
Let $d>1$, $\lambda>0$, $r>0$, and $\kind$ be given and let $Q:=\{\q_1,\dots ,\q_N\}\subset\Ee^d$ be a uniform contraction of $P:=\{\p_1,\dots ,\p_N\}\subset\Ee^d$ with separating value $\lambda$ in $\Ee^d$.
\item{\bf (i)} If $1<d<42$ and $N\geq (1+\sqrt{2})^d$, then 
\begin{equation}\label{inequality-main-application}
V_{k}(P^{r})\leq V_{k}(Q^{r}).
\end{equation}
\item{\bf (ii)} If $d\geq 42$ and $N\geq \sqrt{\frac{\pi}{2d}} (1+\sqrt{2})^d+1$, then (\ref{inequality-main-application}) holds.
\end{theorem}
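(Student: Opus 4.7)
The plan is to bound $V_k(P^r)$ from above via Theorem~\ref{Bezdek-inequality-extended} applied to a Minkowski thickening of $\conv P$, and to bound $V_k(Q^r)$ from below by the intrinsic volume of a Euclidean ball produced from Jung's theorem; both estimates will be intrinsic volumes of Euclidean balls, so their comparison reduces to comparing radii.

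The key identity, valid for every $t \geq 0$, is
\[
P^r \;=\; (\conv P)^r \;=\; \bigl(\conv P + t\,\B^d[\oo,1]\bigr)^{r+t}.
\]
The first equality follows from convexity (a convex combination of the $\p_i$'s lies within distance $r$ of any $\y$ with $|\y-\p_i|\leq r$), and the second from the observation that for any $\y$ the point of $\conv P + t\,\B^d[\oo,1]$ farthest from $\y$ is at distance $\max_{\x\in\conv P}|\y-\x|+t$. Taking $t:=\lambda/2$, set $\A:=\conv P+(\lambda/2)\B^d[\oo,1]$ and $r':=r+\lambda/2$. Since $|\p_i-\p_j|\geq\lambda$, the closed balls $\B^d[\p_i,\lambda/2]$ are pairwise disjoint and all contained in $\A$, whence $V_d(\A)\geq N\omega_d(\lambda/2)^d$. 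Applying Theorem~\ref{Bezdek-inequality-extended} to $\A$ at radius $r'$,
\[
V_k(P^r)=V_k(\A^{r'})\;\leq\;V_k(\B^{r'}),
\]
where $\B$ is a Euclidean ball with $V_d(\B)=V_d(\A)$, hence of radius $\rho=(V_d(\A)/\omega_d)^{1/d}\geq N^{1/d}\lambda/2$; the body $\B^{r'}$ is a Euclidean ball of radius $r+\lambda/2-\rho$ (or empty, rendering the theorem trivial).

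For $Q^r$, Jung's theorem yields $\q_0\in\Ee^d$ with $Q\subseteq\B^d[\q_0,R_Q]$ and $R_Q\leq\lambda\sqrt{d/(2(d+1))}$ (as $\mathrm{diam}(Q)\leq\lambda$), from which $Q^r\supseteq(\B^d[\q_0,R_Q])^r=\B^d[\q_0,r-R_Q]$ and $V_k(Q^r)\geq V_k(\B^d[\q_0,r-R_Q])$. Since $V_k$ of a Euclidean ball is monotone in the radius, the inequality $V_k(P^r)\leq V_k(Q^r)$ follows once $r+\lambda/2-\rho\leq r-R_Q$, i.e.\ $\rho\geq\lambda/2+R_Q$. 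Substituting the estimates above reduces this to the sufficient condition
\[
N^{1/d}\;\geq\;1+\sqrt{\tfrac{2d}{d+1}},
\]
which is implied by the hypothesis $N\geq(1+\sqrt{2})^d$ of part~(i).

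For part~(ii) the volume estimate for $\A$ must be sharpened. Beyond the $N$ disjoint vertex balls, $\A$ also contains Minkowski stadia around segments of $\conv P$ of length at least $\lambda$ (which must exist by $\lambda$-separation), each contributing an additional volume of order $2\omega_{d-1}(\lambda/2)^d$. A spanning-tree style decomposition, managing the tube overlaps at shared vertices (whose angles are $\geq\pi/3$ by $\lambda$-separation), produces a lower bound of the form $V_d(\A)\gtrsim 2(N-1)\,\omega_{d-1}(\lambda/2)^d$; combined with the Stirling asymptotic $\omega_d/(2\omega_{d-1})\sim\sqrt{\pi/(2d)}$, this refinement converts the sufficient condition into $N-1\geq\sqrt{\pi/(2d)}\,(1+\sqrt{2})^d$, with the cutoff $d\geq 42$ emerging as the dimension from which the refined bound first beats that of part~(i). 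The principal obstacle is to quantify the tube-overlap losses sharply enough to produce exactly the constant $\sqrt{\pi/(2d)}$ together with the $+1$ offset.
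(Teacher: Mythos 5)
Your part (i) is correct and is in substance the paper's own argument: the thickening identity $P^r=\left(\conv P+\frac{\lambda}{2}\B^d[\oo,1]\right)^{r+\frac{\lambda}{2}}$ is the paper's identity (\ref{trivial}) (equivalently Lemma~\ref{basic2}), the estimate $V_d(\A)\geq N\omega_d(\lambda/2)^d$ is the same disjoint-ball packing bound, and the combination of Jung's theorem for $Q$ with Theorem~\ref{Bezdek-inequality-extended} and a comparison of ball radii is exactly the proof of Theorem~\ref{Main-Application-Refined}, part (b). Your only real deviation is a harmless streamlining: by letting ``balls of negative radius'' mean $\emptyset$ you run one unified radius comparison and avoid the paper's case split between $0<\lambda\leq\sqrt{2}\,r$ (part (b)) and larger $\lambda$ (part (a), where one shows $P^r=\emptyset$ outright).

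Part (ii), however, has a genuine gap. The sharpened lower bound you need, namely $V_d\left(\conv P+\frac{\lambda}{2}\B^d[\oo,1]\right)\geq(N-1)\lambda\left(\frac{\lambda}{2}\right)^{d-1}\kappa_{d-1}+\left(\frac{\lambda}{2}\right)^d\kappa_d$ for every $\lambda$-separated $N$-point set, is precisely the Betke--Henk theorem (\ref{Betke-Henk}), that is, L.~Fejes T\'oth's sausage conjecture in dimensions $d\geq42$: the convex hull of any packing of $N$ congruent balls has volume at least that of the sausage arrangement. This is a deep result, and a spanning-tree decomposition into stadia with angle bounds $\geq\pi/3$ does not yield it; your sketch itself concedes that the tube-overlap losses are unquantified, and that is exactly where the entire difficulty lies. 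Indeed, the desired inequality fails for $d=3$ (and $d=4$) once $N$ is large (the ``sausage catastrophe''), so no dimension-free elementary tube argument can prove it; high dimension must enter essentially. Your account of the threshold is also off: $d\geq42$ is not ``the dimension from which the refined bound first beats part (i)'' --- the refined sufficient condition $N\geq\sqrt{\pi/(2d)}(1+\sqrt{2})^d+1$ is already the stronger statement for every $d\geq3$ --- it is simply the range in which Betke and Henk proved the sausage conjecture. The paper closes this step by citing their theorem, converting it via $\kappa_{d-1}/\kappa_d>\sqrt{d/(2\pi)}$, and then running the same Jung/radius comparison as in part (i); replacing your heuristic tube estimate by that citation is what completes your outline, and is exactly the paper's proof.
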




\section{Proof of Theorem~\ref{Bezdek-inequality-extended}}

Clearly, if $\B^r=\emptyset$, then $\A^r=\emptyset$ and (\ref{Bezdek-inequality-generalized}) follows. Similarly, it is easy to see that if $\B^r$ is a point in $\Ee^d$, then (\ref{Bezdek-inequality-generalized}) follows. Hence, we may assume that $\B^r=\B^d[\oo, R]$ and $\B=\B^d[\oo, r-R]$ with $0<R< r$.
Next recall that a special case of the Alexandrov-Fenchel inequality yields the following statement (\cite{Sc}, p. 334): if $\K$ is a convex body in $\Ee^d$ satisfying $V_i(\K)\leq V_i(\B^d[\oo, R])$ for given $1\leq i<d$ and $R>0$, then
\begin{equation}\label{A-F-inequality}
V_j(\K)\leq V_j(\B^d[\oo, R]) 
\end{equation}
holds for all $j$ with $i<j\leq d$. Thus, it is sufficient to prove (\ref{Bezdek-inequality-generalized}) for $k=1$ and $\B^r=\B^d[\oo, R]$ with $0<R< r$.

\begin{definition}
Let $\emptyset\neq K\subset\Ee^d$, $d>1$ and $r>0$. Then the {\rm $r$-ball convex hull} ${\rm conv}_rK$ of $K$ is defined by $${\rm conv}_rK:=\bigcap\{ \B^d[\x, r]\ |\ K\subseteq \B^d[\x, r]\}.$$
Moreover, let the $r$-ball convex hull of $\Ee^d$ be $\Ee^d$. Furthermore, we say that  $K\subseteq\Ee^d$ is {\rm $r$-ball convex} if $K={\rm conv}_rK$.
\end{definition}

\begin{remark}\label{empty}
We note that clearly, ${\rm conv}_rK=\emptyset$ if and only if $K^r=\emptyset$. Moreover, $\emptyset\neq K\subset\Ee^d$ is $r$-ball convex if and only if $K$ is an $r$-ball body.
\end{remark}

We need the following statement that has been proved in \cite{Be18}.

\begin{lemma}\label{basic2}
If $K\subseteq\Ee^d$, $d>1$ and $r>0$, then $K^r= ({\rm conv}_rK)^r$.
\end{lemma}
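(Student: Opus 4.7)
The plan is to verify the two inclusions $({\rm conv}_r K)^r\subseteq K^r$ and $K^r\subseteq ({\rm conv}_r K)^r$ separately; both will follow from the defining intersection formulas together with the symmetry of the Euclidean distance. I first dispose of the degenerate cases: if $K=\emptyset$ or $K^r=\emptyset$, then by Remark~\ref{empty} one also has ${\rm conv}_r K=\emptyset$, and both sides of the claimed identity vanish under the convention $\emptyset^r=\emptyset$.

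For the inclusion $({\rm conv}_r K)^r\subseteq K^r$, the key observation is that $K\subseteq{\rm conv}_r K$, since every ball $\B^d[\x,r]$ in the intersection defining ${\rm conv}_r K$ contains $K$ by construction. The monotonicity of the $r$-ball body operation recorded in the introduction ($X\subseteq Y$ implies $Y^r\subseteq X^r$) then immediately gives $({\rm conv}_r K)^r\subseteq K^r$.

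The reverse inclusion is where the definition of ${\rm conv}_r K$ is actually used. The trick is the symmetry-based reformulation
\[
X^r=\{\z\in\Ee^d\st X\subseteq\B^d[\z,r]\},
\]
valid for any nonempty $X\subseteq\Ee^d$ because the relation $|\x-\z|\le r$ is symmetric in $\x$ and $\z$. Applied to an arbitrary $\z\in K^r$, it yields $K\subseteq\B^d[\z,r]$, so $\B^d[\z,r]$ is one of the balls whose intersection defines ${\rm conv}_r K$; consequently ${\rm conv}_r K\subseteq\B^d[\z,r]$. Invoking the reformulation once more, in the opposite direction, I then conclude $\z\in({\rm conv}_r K)^r$. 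I do not anticipate any genuine obstacle: the whole lemma is a formal consequence of the definitions together with the symmetry of the Euclidean metric, and no estimates or convexity inputs are needed beyond the monotonicity of $(\cdot)^r$.
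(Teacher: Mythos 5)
Your proof is correct. Note that this paper does not actually prove Lemma~\ref{basic2}; it simply cites \cite{Be18}, so there is no in-paper argument to compare against, but your self-contained verification is the standard one and is sound: the identity $X^r=\{\z\in\Ee^d\st X\subseteq\B^d[\z,r]\}$, which is just the symmetry of the distance, gives the inclusion $K^r\subseteq(\conv_rK)^r$, while $K\subseteq\conv_rK$ plus monotonicity gives the other. An equivalent packaging, closer to the identities already listed in the introduction, is to observe that your reformulation says precisely that the balls of radius $r$ containing $K$ are exactly those centered at points of $K^r$, so $\conv_rK=(K^r)^r$, and then $(\conv_rK)^r=\left((K^r)^r\right)^r=K^r$ by the triple-dual identity $\left((X^r)^r\right)^r=X^r$ quoted in Section~\ref{sec:intro}; this buys nothing beyond economy of notation. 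One small caveat: your degenerate case relies on the convention $\emptyset^r=\emptyset$, which the paper never states (the empty-intersection convention would naturally give $\Ee^d$); this is consistent with how Remark~\ref{empty} is phrased and with how the lemma is used (only the nonempty case matters in the applications), but it is worth flagging as a convention rather than a consequence of the definitions.
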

 
Hence, via an easy application of Lemma~\ref{basic2} we may assume that $\A\subset\Ee^d$ is an $r$-ball body of volume $V_{d}(\A)>0$ and $\B=\B^d[\oo, r-R]$ with $0<R< r$ such that $V_{d}(\A)=V_{d}(\B)$. Our goal is to prove that 
\begin{equation}\label{fundamental}
V_1(\A^r)\leq V_1(\B^r)=V_1(\B^d[\oo, R]).
\end{equation}

Next recall Theorem 1 of \cite{Ca}, which we state as follows.

\begin{lemma}\label{Capoyleas} 
If $\A\subset\Ee^d$ is an $r$-ball body (for $r>0$), then $\A+\A^r$ is a convex body of constant width $2r$, where $+$ denotes the Minkowski sum.
\end{lemma}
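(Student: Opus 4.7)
My plan is to verify the constant-width condition directly via support functions. Writing $h_\K(u)=\sup_{\x\in\K}\iprod{\x}{u}$ for the support function of a compact convex set $\K\subseteq\Ee^d$ in a unit direction $u$, one has
\begin{equation*}
\mathrm{width}_u(\A+\A^r)=\bigl(h_\A(u)+h_{\A^r}(-u)\bigr)+\bigl(h_\A(-u)+h_{\A^r}(u)\bigr),
\end{equation*}
so it will suffice to prove the single identity $h_\A(u)+h_{\A^r}(-u)=r$ for every unit vector $u$; applying it once in direction $u$ and once in direction $-u$ then yields total width $2r$.

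Before attacking that identity, I would record that $\A^r$ is itself an $r$-ball body. Writing $\A=X^r$ and invoking the triple-dual identity $((X^r)^r)^r=X^r$ noted in the introduction gives $(\A^r)^r=\A$, so $\A$ and $\A^r$ play fully symmetric roles.

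The upper bound $h_\A(u)+h_{\A^r}(-u)\le r$ is immediate: by the definition of $\A^r$, every $\p\in\A$ and $\q\in\A^r$ satisfy $|\p-\q|\le r$, hence $\iprod{\p-\q}{u}\le r$, and taking suprema over $\p$ and $\q$ delivers the bound. For the matching lower bound, I would select a point $\p^{*}\in\A$ maximizing $\iprod{\cdot}{u}$ (which exists by compactness) and argue that $\p^{*}-ru\in\A^r$, i.e., $\A\subseteq\B^d[\p^{*}-ru,r]$; granting this,
\begin{equation*}
h_{\A^r}(-u)\ge\iprod{\p^{*}-ru}{-u}=r-h_\A(u),
\end{equation*}
which is exactly what is needed.

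The geometric heart of the argument, and the step I expect to be the main obstacle, is the containment $\A\subseteq\B^d[\p^{*}-ru,r]$. This is the standard \emph{interior supporting ball} property of spindle-convex (i.e., $r$-ball) bodies: at every boundary point of such a body with outward unit normal $u$, the closed ball of radius $r$ lying on the body's side and tangent to the supporting hyperplane there contains the whole body. At smooth boundary points this is immediate on selecting a generating ball $\B^d[\x_0,r]\supseteq\A$ through $\p^{*}$ with matching outward normal, which forces $\x_0=\p^{*}-ru$; at nonsmooth (corner) points the normal cone at $\p^{*}$ can be strictly larger than the set of normals realized by given generators, so a short continuity/closure argument on the family of generating centers is required. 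I would draw this property from the existing literature on spindle/ball convexity (see, e.g., \cite{BLNP}, \cite{KMP}, \cite{LNT}).
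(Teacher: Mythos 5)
Your argument is correct, but note that the paper does not prove this lemma at all: it simply imports it as Theorem 1 of Capoyleas \cite{Ca}. So your route is genuinely different in that you actually supply a proof. The reduction to the single identity $h_\A(u)+h_{\A^r}(-u)=r$ is sound (the upper bound from $|\p-\q|\le r$ for $\p\in\A$, $\q\in\A^r$ is immediate, and your use of $((X^r)^r)^r=X^r$ to get $(\A^r)^r=\A$ and nonemptiness of $\A^r$ is fine), and the lower bound correctly hinges on the supporting-ball property $\A\subseteq\B^d[\p^*-ru,r]$ at a maximizer $\p^*$ of $\iprod{\cdot}{u}$. That property is indeed standard for intersections of balls of radius $r$ and is available in the references you name (\cite{BLNP}, \cite{KMP}), so citing it is legitimate --- it plays the same role here that the citation of \cite{Ca} plays in the paper. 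One remark: the smooth/nonsmooth case split you sketch is avoidable. Since each generating ball contains the $r$-spindle of any two of its points, $\A$ contains the $r$-spindle of $\p^*$ and any $\x\in\A$ (and ${\rm diam}\,\A\le 2r$ because $\A\subseteq\B^d[\y,r]$ for any $\y\in\A^r$); if one had $|\x-(\p^*-ru)|>r$, the spindle arc from $\p^*$ toward $\x$ would contain points with strictly larger inner product with $u$ than $\p^*$, contradicting maximality. This gives the supporting-ball step uniformly at all boundary points, making your proof essentially self-contained, whereas the paper's treatment buys brevity by quoting Capoyleas's theorem verbatim.
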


Thus, we have

\begin{corollary}\label{C-corollary}
If $\A\subset\Ee^d$ is an $r$-ball body (for $r>0$), then $$V_1(\A)+V_1(\A^r)=\frac{d\omega_d}{\omega_{d-1}}r=V_1(\B)+V_1(\B^r),$$ where
$\B=\B^d[\oo, r-R]$ and $\B^r=\B^d[\oo, R]$ with $0<R< r$ and $V_{d}(\A)=V_{d}(\B)$.
\end{corollary}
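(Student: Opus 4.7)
The plan is to combine Lemma~\ref{Capoyleas} with two standard properties of the first intrinsic volume $V_1$: additivity under Minkowski addition, i.e., $V_1(K+L)=V_1(K)+V_1(L)$ for convex bodies $K,L\subset\Ee^d$, and the identity $V_1(K)=\frac{d\omega_d}{2\omega_{d-1}}\,w(K)$, where $w(K)$ denotes the mean width of $K$. The second is recorded verbatim in the introduction of the paper, while the first is inherited by $V_1$ from the Minkowski-additivity of the mean width (itself an average of the support function, and support functions add under Minkowski sum).

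I would proceed as follows. Since $\A$ is an $r$-ball body, Lemma~\ref{Capoyleas} ensures that $\A+\A^r$ is a convex body of constant width $2r$, so its mean width equals $2r$. Invoking Minkowski-additivity of $V_1$ together with the mean-width identity yields
\[
V_1(\A)+V_1(\A^r)\;=\;V_1(\A+\A^r)\;=\;\frac{d\omega_d}{2\omega_{d-1}}\cdot 2r\;=\;\frac{d\omega_d}{\omega_{d-1}}\,r.
\]
For the ball case, $\B=\B^d[\oo, r-R]$ and $\B^r=\B^d[\oo, R]$ are concentric, so $\B+\B^r=\B^d[\oo, r]$ is a body of constant width $2r$, and the identical computation gives $V_1(\B)+V_1(\B^r)=\frac{d\omega_d}{\omega_{d-1}}\,r$. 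Combining these two equalities establishes the corollary.

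I do not foresee any substantive obstacle: the statement is essentially a one-line consequence of Lemma~\ref{Capoyleas} once Minkowski-additivity of $V_1$ is quoted. The only mildly delicate point would be the degenerate case in which $\A^r$ is empty or a single point, where the Minkowski-sum identity in Lemma~\ref{Capoyleas} needs to be interpreted carefully; however, under the standing hypothesis $V_d(\A)=V_d(\B)>0$ and the reductions already performed earlier in the proof of Theorem~\ref{Bezdek-inequality-extended}, both $\A$ and $\A^r$ are full-dimensional convex bodies, so this point does not arise.
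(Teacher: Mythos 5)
Your proof is correct and follows exactly the route the paper intends: the corollary is deduced from Lemma~\ref{Capoyleas} via Minkowski-additivity of $V_1$ and the identity $V_1=\frac{d\omega_d}{2\omega_{d-1}}\cdot(\text{mean width})$, applied both to $\A+\A^r$ and to $\B+\B^r=\B^d[\oo,r]$. Nothing substantive differs from the paper's (implicit, one-line) argument.
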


Finally, recall that (\ref{A-F-inequality}) for $j=d$ can be restated as follows (\cite{Sc}, p. 335): if $\A$ is a convex body in $\Ee^d$ satisfying $V_d(\A)= V_d(\B^d[\oo, r-R])$ for given 
$d>1$ and $0<R<r$, then 
\begin{equation}\label{reverse-inequality}
V_i(\A)\geq V_i(\B^d[\oo, r-R])
\end{equation} 
holds for all $i$ with $1\leq i<d$.

Hence, Corollary~\ref{C-corollary} and (\ref{reverse-inequality}) for $i=1$ imply (\ref{fundamental}) in a straightforward way. This completes the proof of Theorem~\ref{Bezdek-inequality-extended}.

\section{Proof of Theorem~\ref{main-application}}

\subsection{Proof of Part {\bf (i)} of Theorem~\ref{main-application}}

Let $d>1$, $\lambda>0$, $r>0$, and $\kind$ be given. If $\lambda>2r$, then $V_k(P^r)=V_k(\emptyset)=0$ and (\ref{inequality-main-application}) follows. Thus, we may assume that $0<\lambda\leq 2r$, and as in \cite{BeNa}, we proceed by proving the following theorem, which implies part {\bf (i)} of Theorem~\ref{main-application} in a straightforward way.

\begin{theorem}\label{Main-Application-Refined}
Let $d>1$, $\lambda>0$, $r>0$, and $\kind$ be given such that $0<\lambda\leq 2r$. Let $Q:=\{\q_1,\dots ,\q_N\}\subset\Ee^d$ be a uniform contraction of $P:=\{\p_1,\dots ,\p_N\}\subset\Ee^d$ with separating value $\lambda$ in $\Ee^d$. If
\item {\bf (a)} $N\geq \left(1+\frac{2r}{\lambda}\right)^d$, or
\item {\bf (b)} $0<\lambda\leq\sqrt{2} r$ and $N\geq \left(1+\sqrt{\frac{2d}{d+1}}\right)^d$,

\noindent then (\ref{inequality-main-application}) holds.
\end{theorem}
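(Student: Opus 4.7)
If $P^r = \emptyset$, the inequality is trivial, so I may assume $P^r \neq \emptyset$ and pick any $\z \in P^r$, which places all of $P$ in $\B^d[\z, r]$. Part~(a) then follows immediately from the standard volume-packing bound: the balls $\B^d[\p_i, \lambda/2]$ have pairwise disjoint interiors (by the separation $\lambda$) and lie in $\B^d[\z, r + \lambda/2]$, so comparing volumes yields $N \leq (1 + 2r/\lambda)^d$, with strict inequality for $d \geq 2$ since balls cannot tile a ball. This contradicts $N \geq (1 + 2r/\lambda)^d$, forcing $P^r = \emptyset$ after all.

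For Part~(b), set $\rho := \lambda\sqrt{d/(2(d+1))}$, so $\rho < r$ by $\lambda \leq \sqrt{2}\,r$. On the $Q$-side, Jung's theorem gives a center $\z_Q$ with $Q \subseteq \B^d[\z_Q, \rho]$; by monotonicity of the $r$-ball body operation, $Q^r \supseteq (\B^d[\z_Q, \rho])^r = \B^d[\z_Q, r - \rho]$, so $V_k(Q^r) \geq V_k(\B^d[\oo, r - \rho])$. On the $P$-side, I would apply Theorem~\ref{Bezdek-inequality-extended} to $\A := {\rm conv}_r P$, which by Lemma~\ref{basic2} satisfies $\A^r = P^r$. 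This gives $V_k(P^r) \leq V_k(\B^r)$, where $\B$ is a ball with $V_d(\B) = V_d({\rm conv}_r P)$. The chain $V_k(P^r) \leq V_k(\B^r) \leq V_k(\B^d[\oo, r-\rho]) \leq V_k(Q^r)$ closes up as soon as one can show
\[
V_d({\rm conv}_r P) \;\geq\; \omega_d \rho^d,
\]
since this forces the ball $\B$ to have radius at least $\rho$.

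The main obstacle is this volume lower bound on ${\rm conv}_r P$. The hypothesis of Part~(b) rewrites as $N \geq (1 + 2\rho/\lambda)^d$, which by the packing argument of Part~(a) (with $r$ replaced by $\rho$) rules out $P \subseteq \B^d[\z, \rho]$ for any $\z$; the circumradius of $P$ therefore exceeds $\rho$. I plan to convert this into a volume bound via the inclusion $\B^d[\z^*, r - \rho_{P^r}] \subseteq {\rm conv}_r P$, where $\z^*$ and $\rho_{P^r}$ are the circumcenter and circumradius of $P^r$: any $\x$ within distance $r - \rho_{P^r}$ of $\z^*$ satisfies $|\x - \y| \leq r$ for every $\y \in P^r$ by the triangle inequality, and hence lies in ${\rm conv}_r P$. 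Thus the volume bound reduces to the upper estimate $\rho_{P^r} \leq r - \rho$, and the technical heart of the argument is to derive this sharp bound from the $N$-point packing data of~(b); the naive pairwise lens estimate $\rho_{P^r} \leq \sqrt{r^2 - (\lambda/2)^2}$ is not strong enough when $\lambda$ approaches $\sqrt{2}\,r$, so one must exploit the full $N$-point structure of the intersection $P^r = \bigcap_{i=1}^{N} \B^d[\p_i, r]$.
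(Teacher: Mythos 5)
Your part (a) and the $Q$-side of part (b) (Jung's theorem giving $\B^d[\x,r-\sqrt{\tfrac{2d}{d+1}}\tfrac{\lambda}{2}]\subseteq Q^r$) coincide with the paper's argument. The gap is on the $P$-side of part (b). You apply Theorem~\ref{Bezdek-inequality-extended} to $\A={\rm conv}_rP$ at radius $r$, which forces you to produce the lower bound $V_d({\rm conv}_rP)\geq\kappa_d\rho^d$ with $\rho=\sqrt{\tfrac{2d}{d+1}}\tfrac{\lambda}{2}$, and you then reduce this to the circumradius estimate ${\rm cr}(P^r)\leq r-\rho$, which you explicitly leave unproved (``the technical heart of the argument''). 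That reduction is not a harmless technicality: a bound on ${\rm cr}(P^r)$ is \emph{stronger} than the conclusion you are after (it would give $V_k(P^r)\leq V_k(\B^d[\oo,r-\rho])$ immediately by monotonicity, with no need for Theorem~\ref{Bezdek-inequality-extended} or ${\rm conv}_rP$ at all), and it is far from clear that it follows from the hypotheses. As you note yourself, even for two points the lens $\B^d[\p_1,r]\cap\B^d[\p_2,r]$ has circumradius $\sqrt{r^2-\lambda^2/4}>r-\lambda/2$, and the hypotheses of (b) only control intrinsic volumes, not the shape of $P^r$, which can a priori be thin and elongated with circumradius well above $r-\rho$; a crude two-point-of-$P^r$ packing argument only caps the diameter of $P^r$ at about $2r/\sqrt{d+1}$, an order $\sqrt{d}$ away from $r-\rho$ when $\lambda$ is near $\sqrt2\,r$. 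So the proposal is incomplete exactly at the step that carries the content of the theorem.

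The missing idea in the paper is to change the radius rather than the set: by the identity $X^r=\bigl(\bigcup_{i}\B^d[\x_i,r^*]\bigr)^{r+r^*}$ one writes $P^r=\bigl(\bigcup_{i=1}^N\B^d[\p_i,\tfrac{\lambda}{2}]\bigr)^{r+\frac{\lambda}{2}}$, where the balls $\B^d[\p_i,\tfrac{\lambda}{2}]$ are pairwise non-overlapping, so the generating set has volume exactly $N\kappa_d(\lambda/2)^d$ — no lower bound on $V_d({\rm conv}_rP)$ and no circumradius control of $P^r$ is ever needed. Applying Theorem~\ref{Bezdek-inequality-extended} at radius $r+\tfrac{\lambda}{2}$ to this union and to the ball of equal volume, of radius $\mu=\tfrac12N^{1/d}\lambda$, gives $V_k(P^r)\leq V_k\bigl(\B^d[\oo,\,r-\tfrac{N^{1/d}-1}{2}\lambda]\bigr)$, and the hypothesis $N\geq\bigl(1+\sqrt{\tfrac{2d}{d+1}}\bigr)^d$ makes $\tfrac{N^{1/d}-1}{2}\lambda\geq\rho$, closing the chain against the Jung bound on the $Q$-side. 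If you replace your ${\rm conv}_rP$ step by this identity, the rest of your outline goes through.
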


\begin{proof}

Following \cite{BeNa}, our proof is based on proper estimates of the following functionals.

\begin{definition} Let 
\begin{equation}\label{d-1}
f_{k, d}(N,\lambda ,r):=\min\{V_{k}(Q^{r})\ |\ Q:=\{\q_1,\dots ,\q_N\}\subset\Ee^d,\  |\q_i-\q_j|\leq\lambda\ {\rm for\ all} \ 1\leq i<j\leq N \}
\end{equation}
and
\begin{equation}\label{d-2}
g_{k,d}(N,\lambda , r):=\max\{V_{k}(P^{r})\ |\ P:=\{\p_1,\dots ,\p_N\}\subset\Ee^d,\  \lambda\leq |\p_i-\p_j|\ {\rm for\ all} \ 1\leq i<j\leq N \}
\end{equation}
\end{definition}

We note that in this paper the maximum of the empty set is zero. We need also

\begin{definition}
The circumradius ${\rm cr} X$ of the set $X\subseteq\Ee^d$, $d>1$  is defined by
$$
{\rm cr}X:=\inf\{r\ |\ X\subseteq \B^d[\x, r]\ {\rm for\ some}\ \x\in\Ee^d\}.
$$
\end{definition}

\bigskip

{\it Part} {\bf (a)}:
By assumption $N\geq \left(1+\frac{2r}{\lambda}\right)^d$ and so
\begin{equation}\label{ball-density}
N\left(\frac{\lambda}{2}\right)^d\kappa_d\geq\left(\frac{\lambda}{2}+r\right)^d\kappa_d,
\end{equation} 
where $\kappa_d$ denotes the volume of a $d$-dimensional unit ball in $\Ee^d$. As the balls $\p_1+\B^d[\oo, \frac{\lambda}{2}],\dots ,\p_N+\B^d[\oo, \frac{\lambda}{2}]$ are pairwise non-overlapping in $\Ee^d$ therefore 
(\ref{ball-density}) yields in a straightforward way that ${\rm cr} P> r$. Thus, $P^r=\emptyset$ and therefore clearly, $g_{k,d}(N,\lambda , r)=0\leq
f_{k,d}(N,\lambda ,r)$ holds, finishing the proof of Theorem~\ref{Main-Application-Refined}, part (a).
\bigskip

{\it Part} {\bf (b)}:
For the proof that follows we need the following straightforward extension of the rather obvious but very useful Euclidean identity (9) of \cite{BeNa}: for any $X=\{\x_1, \dots , \x_n\}\subset \Ee^d, n>1, d>1, r>0$, and $r^*>0$ one has

\begin{equation}\label{trivial}
X^r=\left(\bigcup_{i=1}^{n}\B^d[\x_i, r^*]\right)^{r+r^*}.
\end{equation}

First, we give a lower bound for (\ref{d-1}). Jung's theorem (\cite{De}) implies in a straightforward way that ${\rm cr} Q\leq\sqrt{\frac{2d}{d+1}}\frac{\lambda}{2}$ and so, $\B^d\left[\x, r-\sqrt{\frac{2d}{d+1}}\frac{\lambda}{2}\right] \subset Q^{r}$ for some $\x\in\Ee^d$. (We note that by assumption  $N\geq \left(1+\sqrt{\frac{2d}{d+1}}\right)^d>1$ and $r-\sqrt{\frac{2d}{d+1}}\frac{\lambda}{2}>r-\frac{1}{\sqrt{2}}\lambda\geq 0$.) As a result we get that 
\begin{equation}\label{1-E}
f_{k,d}(N,\lambda,r)>V_{k}\left(\B^d\left[\x, r-\sqrt{\frac{2d}{d+1}}\frac{\lambda}{2}\right]\right).
\end{equation}
Second, we give an upper bound for (\ref{d-2}). (\ref{trivial}) implies that
\begin{equation}\label{2-E}
P^{r}=\left(\bigcup_{i=1}^{N}\B^d\left[\p_i,\frac{\lambda}{2}\right]\right)^{r+\frac{\lambda}{2}}, 
\end{equation}
where the balls $\B^d[\p_1,\frac{\lambda}{2}],\dots ,\B^d[\p_N,\frac{\lambda}{2}]$ are pairwise non-overlapping in $\Ee^d$. Thus, 
\begin{equation}\label{3-E}
V_{d}\left(\bigcup_{i=1}^{N}\B^d\left[\p_i,\frac{\lambda}{2}\right]\right)=N V_{d}\left( \B^d\left[\p_1,\frac{\lambda}{2}\right]\right).
\end{equation}
Let $\mu>0$ be chosen such that $N V_{d}\left( \B^d\left[\p_1,\frac{\lambda}{2}\right]\right)=V_{d}\left( \B^d\left[\p_1,\mu\right]\right)$. Clearly,
\begin{equation}\label{4-E}
\mu=\frac{1}{2}N^{\frac{1}{d}}\lambda
\end{equation}
Now Theorem~\ref{Bezdek-inequality-extended}, (\ref{2-E}), (\ref{3-E}), and (\ref{4-E}) imply in a straightforward way that
\begin{equation}\label{5-E}
V_{k}\left(P^{r}\right)=V_{k}\left(\left(\bigcup_{i=1}^{N}\B^d\left[\p_i,\frac{\lambda}{2}\right]\right)^{r+\frac{\lambda}{2}}\right)\leq
V_{k}\left(\left(\B^d\left[\p_1, \frac{1}{2}N^{\frac{1}{d}}\lambda\right]\right)^{r+\frac{\lambda}{2}}\right)
\end{equation}
Clearly, $\left(\B^d\left[\p_1, \frac{1}{2}N^{\frac{1}{d}}\lambda\right]\right)^{r+\frac{\lambda}{2}}=\B^d\left[\p_1,r-\frac{N^{\frac{1}{d}}-1}{2}\lambda\right]$ with the convention that if $r-\frac{N^{\frac{1}{d}}-1}{2}\lambda<0$, then $\B^d\left[\p_1,r-\frac{N^{\frac{1}{d}}-1}{2}\lambda\right]=\emptyset$. Hence (\ref{5-E}) yields
\begin{equation}\label{6-E}
g_{k,d}(N,\lambda,r)\leq V_{k}\left(\B^d\left[\p_1,r-\frac{N^{\frac{1}{d}}-1}{2}\lambda\right]\right)
\end{equation}
(with $V_{k}(\emptyset)=0$). Finally, as $N\geq\left(1+\sqrt{\frac{2d}{d+1}}\right)^d$ therefore $\frac{N^{\frac{1}{d}}-1}{2}\lambda\geq \sqrt{\frac{2d}{d+1}}\frac{\lambda}{2}$ and so, (\ref{1-E}) and (\ref{6-E}) yield $g_{k,d}(N,\lambda,r)<f_{k,d}(N,\lambda,r)$, finishing the proof of Theorem~\ref{Main-Application-Refined}, part (b).
\end{proof}

\subsection{Proof of Part {\bf (ii)} of Theorem~\ref{main-application}}

The following strengthening of Theorem~\ref{Main-Application-Refined} implies part {\bf (ii)} of Theorem~\ref{main-application} in a straightforward way.
Thus, we are left to prove

\begin{theorem}\label{Main-Application-Refined and Revised}
Let $d\geq 42$, $\lambda>0$, $r>0$, and $\kind$ be given such that $0<\lambda\leq 2r$. Let $Q:=\{\q_1,\dots ,\q_N\}\subset\Ee^d$ be a uniform contraction of $P:=\{\p_1,\dots ,\p_N\}\subset\Ee^d$ with separating value $\lambda$ in $\Ee^d$. If
\item {\bf (a)} $N\geq \sqrt{\frac{\pi}{2d}}\left(1+\frac{2r}{\lambda}\right)^d+1$, or
\item {\bf (b)} $0<\lambda\leq\sqrt{2} r$ and $N\geq  \sqrt{\frac{\pi}{2d}}\left(1+\sqrt{\frac{2d}{d+1}}\right)^d+1$,

\noindent then (\ref{inequality-main-application}) holds.
\end{theorem}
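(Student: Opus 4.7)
The plan is to mirror the two-part structure of the proof of Theorem~\ref{Main-Application-Refined} while sharpening each of its volumetric bounds by an extra dimensional factor of $\sqrt{\pi/(2d)}$. This factor corresponds, up to lower-order terms, to the large-$d$ asymptotic $\omega_d/(2\omega_{d-1}) \sim \sqrt{\pi/(2d)}$ coming from Stirling's formula, and the threshold $d \geq 42$ is precisely where the required numerical estimates become tight.

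For part \textbf{(a)}, the plan is to refine the elementary volume comparison used in the proof of Theorem~\ref{Main-Application-Refined}(a). There, under the assumption ${\rm cr}(P) \leq r$, the $N$ pairwise non-overlapping balls $\B^d[\p_i, \lambda/2]$ are contained in $\B^d[\oo, r+\lambda/2]$, yielding the trivial bound $N \leq (1+2r/\lambda)^d$. I would replace this one-line estimate with a sharpened finite ball-packing inequality of the form
\begin{equation*}
N - 1 \leq \sqrt{\pi/(2d)}\left(1 + 2r/\lambda\right)^d,
\end{equation*}
valid for $d \geq 42$. This sharpened bound would directly contradict the hypothesis $N \geq \sqrt{\pi/(2d)}(1 + 2r/\lambda)^d + 1$ and hence force ${\rm cr}(P) > r$, so that $P^r = \emptyset$ and (\ref{inequality-main-application}) holds trivially.

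For part \textbf{(b)}, I would retain the Jung-based lower bound $f_{k,d}(N,\lambda,r) > V_k(\B^d[\x, r - \sqrt{2d/(d+1)}\lambda/2])$ verbatim and sharpen the upper bound on $g_{k,d}$ produced by the chain (\ref{2-E})--(\ref{6-E}). Concretely, I would apply Theorem~\ref{Bezdek-inequality-extended} not to $A = \bigcup_i \B^d[\p_i, \lambda/2]$ but to the strictly larger comparison body $A' = \conv\{\p_1,\dots,\p_N\} + (\lambda/2)\B^d[\oo,1]$; an argument parallel to (\ref{trivial}) together with Lemma~\ref{basic2} shows that $(A')^{r+\lambda/2} = P^r$, so Theorem~\ref{Bezdek-inequality-extended} still applies with $A'$ in place of the disjoint union. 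Expanding $V_d(A')$ via the Steiner formula and bounding the lower-order intrinsic volumes $V_j(\conv\{\p_i\})$ from below using the pairwise-distance hypothesis then provides the same $\sqrt{\pi/(2d)}$ improvement factor in the value of $\mu$ computed in (\ref{4-E}), which in turn suffices to conclude $V_k(P^r) \leq V_k(Q^r)$ under the weaker hypothesis $N \geq \sqrt{\pi/(2d)}(1 + \sqrt{2d/(d+1)})^d + 1$.

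The main obstacle is justifying the exact shape of the refinement and the threshold $d = 42$. The factor $\sqrt{\pi/(2d)}$ is substantially weaker than classical packing-density upper bounds of Blichfeldt or Rogers type, which strongly suggests that the proof does not invoke a heavy packing result but rather uses a direct Stirling-level comparison that only becomes effective at $d = 42$. Pinning down this precise threshold, and showing that the Steiner expansion of $V_d(A')$ really does produce the needed $\sqrt{2d/\pi}$ improvement over $N\omega_d(\lambda/2)^d$ through a valid (not merely trivial) lower bound on the intrinsic volumes of $\conv\{\p_i\}$, is where I expect the technical work to concentrate.
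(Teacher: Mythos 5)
There is a genuine gap, and it is exactly at the point you flag as the ``main obstacle''. Your overall framework matches the paper's: part (a) is a circumradius/volume contradiction forcing $P^r=\emptyset$, and in part (b) your comparison body $A'=\conv\{\p_1,\dots,\p_N\}+\frac{\lambda}{2}\B^d[\oo,1]$ is precisely the body the paper uses, since $\conv\left(\bigcup_{i=1}^N\B^d[\p_i,\frac{\lambda}{2}]\right)=\conv\{\p_1,\dots,\p_N\}+\frac{\lambda}{2}\B^d[\oo,1]$, and the containment $P^r\subseteq (A')^{r+\frac{\lambda}{2}}$ together with Theorem~\ref{Bezdek-inequality-extended} is exactly how the paper bounds $g_{k,d}$. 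But the crucial volumetric input that you leave unproved --- the ``sharpened finite ball-packing inequality'' $N-1\leq\sqrt{\pi/(2d)}\left(1+2r/\lambda\right)^d$ in part (a), and the lower bound on $V_d(A')$ of the form $\left((N-1)\sqrt{2d/\pi}+1\right)\left(\frac{\lambda}{2}\right)^d\kappa_d$ in part (b) --- is in both cases the theorem of Betke and Henk proving L.~Fejes T\'oth's sausage conjecture for $d\geq 42$: for non-overlapping balls $\B^d[\p_i,\frac{\lambda}{2}]$ one has $V_d\left(\conv\bigcup_i\B^d[\p_i,\frac{\lambda}{2}]\right)\geq (N-1)\lambda\left(\frac{\lambda}{2}\right)^{d-1}\kappa_{d-1}+\left(\frac{\lambda}{2}\right)^d\kappa_d$, which combined with $\kappa_{d-1}/\kappa_d>\sqrt{d/(2\pi)}$ gives the $\sqrt{2d/\pi}$ gain. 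Your explicit guess that ``the proof does not invoke a heavy packing result but rather uses a direct Stirling-level comparison'' is therefore wrong: the threshold $d=42$ is not where a Stirling estimate becomes effective, it is the dimension bound in the Betke--Henk theorem, which the paper uses as a black box.

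Your proposed substitute for this input --- expanding $V_d(A')$ by the Steiner formula and bounding the intrinsic volumes $V_j(\conv\{\p_i\})$ from below using only the hypothesis $|\p_i-\p_j|\geq\lambda$ --- does not work. The term one would need to dominate is the $j=1$ Steiner term, i.e.\ essentially $V_1(\conv\{\p_i\})\geq (N-1)\lambda$, and this is false: for $N=d+1$ points forming a regular simplex of edge length $\lambda$, the mean width is of order $\lambda\sqrt{\log d/d}$, so $V_1(\conv\{\p_i\})$ is of order $\lambda\sqrt{\log d}$, far below $(N-1)\lambda=d\lambda$; and no other single Steiner term compensates by an elementary argument (e.g.\ $V_d(\conv\{\p_i\})$ is super-exponentially small for the simplex). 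The inequality you need for the full Steiner sum, with the expansion radius $\frac{\lambda}{2}$ equal to the packing radius, is literally the sausage inequality, a deep theorem rather than something recoverable from pairwise-distance lower bounds on individual $V_j$'s. So the proposal is not a proof: both parts hinge on an unsupplied (and, by your own route, unobtainable) packing-theoretic inequality, namely Betke--Henk.
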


\begin{proof}
We use the notations and methods of the proof of Theorem~\ref{Main-Application-Refined}. Furthermore, we need the following well-known result of U. Betke and M. Henk \cite{BeHe},
which proves the sausage conjecture of L. Fejes T\'oth in $\Ee^d$ for $d\geq 42$: whenever the balls $\B^d[\p_1,\frac{\lambda}{2}],\dots ,\B^d[\p_N,\frac{\lambda}{2}]$ are pairwise non-overlapping 
in $\Ee^d$ then
\begin{equation}\label{Betke-Henk}
V_{d}\left({\rm conv}\left(\bigcup_{i=1}^{N}\B^d\left[\p_i,\frac{\lambda}{2}\right]\right)\right)\geq (N-1)\lambda \left(\frac{\lambda}{2}\right)^{d-1}\kappa_{d-1}+\left(\frac{\lambda}{2}\right)^d\kappa_d,
\end{equation}
where ${\rm conv}(\cdot)$ denotes the convex hull of the given set. Using the inequality $\frac{\kappa_{d-1}}{\kappa_{d}}>\sqrt{\frac{d}{2\pi}}$ for $d\geq 1$ (see Lemma 1 in \cite{BeGrWi}) we get in a straightforward way from (\ref{Betke-Henk}) that
\begin{equation}\label{Betke-Henk-modified}
V_{d}\left({\rm conv}\left(\bigcup_{i=1}^{N}\B^d\left[\p_i,\frac{\lambda}{2}\right]\right)\right)>\left(\left(N-1\right)\sqrt{\frac{2d}{\pi}}+1\right)\left(\frac{\lambda}{2}\right)^d\kappa_d.
\end{equation}
\bigskip
{\it Part} {\bf (a)}:
By assumption $N\geq \sqrt{\frac{\pi}{2d}}\left(1+\frac{2r}{\lambda}\right)^d+1>\sqrt{\frac{\pi}{2d}}\left[\left(1+\frac{2r}{\lambda}\right)^d-1\right]+1$ and so
\begin{equation}\label{ball-density-1}
\left(\left(N-1\right)\sqrt{\frac{2d}{\pi}}+1\right)\left(\frac{\lambda}{2}\right)^d\kappa_d>\left(\frac{\lambda}{2}+r\right)^d\kappa_d.
\end{equation} 
As the balls $\p_1+\B^d[\oo, \frac{\lambda}{2}],\dots ,\p_N+\B^d[\oo, \frac{\lambda}{2}]$ are pairwise non-overlapping in $\Ee^d$ therefore 
(\ref{Betke-Henk-modified}) and (\ref{ball-density-1}) yield in a straightforward way that ${\rm cr} P> r$. Thus, $P^r=\emptyset$ and therefore clearly, $g_{k,d}(N,\lambda , r)=0\leq
f_{k,d}(N,\lambda ,r)$ holds, finishing the proof of Theorem~\ref{Main-Application-Refined and Revised}, part (a).

\bigskip

{\it Part} {\bf (b)}:
In the same way as in the proof of part {\bf (b)} of Theorem~\ref{Main-Application-Refined} one can derive that

\begin{equation}\label{Jung-applied}
f_{k,d}(N,\lambda,r)>V_{k}\left(\B^d\left[\x, r-\sqrt{\frac{2d}{d+1}}\frac{\lambda}{2}\right]\right).
\end{equation}

Next, recall that

\begin{equation}\label{ball-packing}
P^{r}=\left(\bigcup_{i=1}^{N}\B^d\left[\p_i,\frac{\lambda}{2}\right]\right)^{r+\frac{\lambda}{2}}, 
\end{equation}
where the balls $\B^d[\p_1,\frac{\lambda}{2}],\dots ,\B^d[\p_N,\frac{\lambda}{2}]$ are pairwise non-overlapping in $\Ee^d$. Lemma~\ref{basic2} applied to (\ref{ball-packing}) yields

\begin{equation}\label{ball-packing-upper-bounded}
P^{r}=\left(\bigcup_{i=1}^{N}\B^d\left[\p_i,\frac{\lambda}{2}\right]\right)^{r+\frac{\lambda}{2}}= \left({\rm conv}_{r+\frac{\lambda}{2}}\left(\bigcup_{i=1}^{N}\B^d\left[\p_i,\frac{\lambda}{2}\right]\right)\right)^{r+\frac{\lambda}{2}}
\subset \left({\rm conv}\left(\bigcup_{i=1}^{N}\B^d\left[\p_i,\frac{\lambda}{2}\right]\right)\right)^{r+\frac{\lambda}{2}}.
\end{equation}
Hence, Theorem~\ref{Bezdek-inequality-extended}, (\ref{Betke-Henk-modified}), and (\ref{ball-packing-upper-bounded}) imply in a straightforward way that
\begin{equation}\label{combined}
V_k(P^r)<V_k\left(\left(\B^d[\oo ,\mu]\right)^{r+\frac{\lambda}{2}}\right),
\end{equation}
where $V_d\left(\B^d[\oo ,\mu]\right)=\left(\left(N-1\right)\sqrt{\frac{2d}{\pi}}+1\right)\left(\frac{\lambda}{2}\right)^d\kappa_d$. Thus, (\ref{combined}) yields
\begin{equation}\label{final-upper-bound}
g_{k,d}(N,\lambda , r)<V_k\left(\B^d\left[\oo , r- \left(\left(\left(N-1\right)\sqrt{\frac{2d}{\pi}}+1\right)^{\frac{1}{d}}-1\right)\frac{\lambda}{2}\right]\right)
\end{equation}
(with $V_{k}(\emptyset)=0$). Finally, as $N\geq  \sqrt{\frac{\pi}{2d}}\left(1+\sqrt{\frac{2d}{d+1}}\right)^d+1>\sqrt{\frac{\pi}{2d}}\left(1+\sqrt{\frac{2d}{d+1}}\right)^d+\left(1-\sqrt{\frac{\pi}{2d}}\right)$ therefore $\left(\left(\left(N-1\right)\sqrt{\frac{2d}{\pi}}+1\right)^{\frac{1}{d}}-1\right)\frac{\lambda}{2}>\sqrt{\frac{2d}{d+1}}\frac{\lambda}{2}$ and so, (\ref{Jung-applied}) and (\ref{final-upper-bound}) yield $g_{k,d}(N,\lambda,r)<f_{k,d}(N,\lambda,r)$, finishing the proof of Theorem~\ref{Main-Application-Refined and Revised}, part (b).

\end{proof}

{\bf Acknowledgements}

The author is indebted to the anonymous referees for careful reading and valuable comments.

\small

\bigskip


\noindent K\'aroly Bezdek \\
\small{Department of Mathematics and Statistics, University of Calgary, Canada}\\
\small{Department of Mathematics, University of Pannonia, Veszpr\'em, Hungary\\
\small{E-mail: \texttt{bezdek@math.ucalgary.ca}}


\begin{thebibliography}{GGM}



\bibitem{BeGrWi}
U. Betke, P. Gritzmann, and J. M. Wills, Slices of L. Fejes T\'oth's sausage conjecture, {\it Mathematika} {\bf 29/2} (1982), 194--201.

\bibitem{BeHe}
U. Betke and M. Henk, Finite packings of spheres, {\it Discrete Comput. Geom.} {\bf 19/2} (1998), 197--227.


\bibitem{BeCo}
K. Bezdek and R. Connelly, Pushing disks apart - the Kneser-Poulsen conjecture in the plane, {\it J. Reine Angew. Math.} {\bf 553} (2002), 221--236.

\bibitem{BLNP}
K. Bezdek, Zs. L\'angi, M. Nasz\'odi, and P. Papez, Ball-polyhedra, {\it Discrete Comput. Geom.} {\bf 38/2} (2007), 201--230.



\bibitem{Be18}  K. Bezdek, From r-dual sets to uniform contractions, {\it Aequationes Math.} \textbf{92/1} (2018), 123--134.

\bibitem{BeNa}
K. Bezdek and M. Nasz\'odi, The Kneser-Poulsen conjecture for special contractions, {\it Discrete and Comput. Geom.} \textbf{60/4} (2018), 967--980.

\bibitem{Ca}
V. Capoyleas, On the area of the intersection of disks in the plane, {\it Comput. Geom.} {\bf 6/6} (1996), 393--396.



\bibitem{CsHo}
B. Csik\'os and M. Horv\'ath, Two Kneser-Poulsen-type inequalities in planes of constant curvature, {\it Acta Math. Hungar.} {\bf 155/1} (2018), 158--174.


\bibitem{De}
B. V. Dekster, The Jung theorem for spherical and hyperbolic spaces, {\it Acta Math. Hungar.} {\bf 67/4} (1995), 315--331.

\bibitem{Fe}
L. Fejes T\'oth, Packing of r-convex discs, {\it Studia Sci. Math. Hungar.} {\bf 17/1-4} (1982), 449--452.

\bibitem{FKV}
F. Fodor, \'A. Kurusa, and V. V\'igh, Inequalities for hyperconvex sets, {\it Adv. Geom.} {\bf 16/3} (2016), 337--348.

\bibitem{GHSch}
F. Gao, D. Hug, and R. Schneider, Intrinsic volumes and polar sets in spherical space, {\it Math. Notae} {\bf 41} (2003), 159--176. 

\bibitem{Go}
I. Gorbovickis, The central set and its application to the Kneser-Poulsen conjecture {\it Discrete Comput. Geom.} {\bf 59/4} (2018), 784--801.


\bibitem{Gr87} M. Gromov, Monotonicity of the volume of intersections of balls, in {\it Geometrical Aspects of
Functional Analysis}, ed. by J. Lindenstrauss, V. D. Milman. Springer Lecture Notes, \textbf{1267},
Springer, New York, 1987, 1--4.

\bibitem{Kl}
V. Klee, Some unsolved problems in plane geometry, {\it Math. Mag.} {\bf 52/3} (1979), 131--145.


\bibitem{KlWa}
V. Klee and S. Wagon, {\it Old and new unsolved problems in plane geometry and number theory}, The Dolciani Mathematical Expositions, vol. 11, Mathematical Association of America, Washington, DC, 1991.

\bibitem{Kn55} M. Kneser, Einige Bemerkungen \"uber das Minkowskische Fl\"achenmass, {\it Arch. Math.} \textbf{6} (1955),
382--390.


\bibitem{KMP}
Y. S. Kupitz, H. Martini, and M. A. Perles, Ball polytopes and the V\'azsonyi problem, {\it Acta Math. Hungar.} {\bf 126/1-2} (2010), 99--163.


\bibitem{LNT}
Zs. L\'angi, M. Nasz\'odi, and I. Talata, Ball and spindle convexity with respect to a convex body, {\it Aequationes Math.} {\bf 85/1-2} (2013), 41--67.

\bibitem{Ma}
A. E. Mayer, Eine \"Uberkonvexit\"at, {\it Math. Z.} {\bf 39/1} (1935), 511--531.

\bibitem{PaPi}
G. Paouris and P. Pivovarov, Random ball-polyhedra and inequalities for intrinsic volumes, {\it Monatsh. Math.} {\bf 182/3} (2017), 709--729. 

\bibitem{Po54} E. T. Poulsen, Problem 10, {\it Math. Scand.} \textbf{2} (1954), 346.

\bibitem{Sc} R. Schneider, {\it Convex bodies: the Brunn-Minkowski theory}, Encyclopedia of Mathematics and its Applications, vol. 44, Cambridge University Press, Cambridge, 1993.






\end{thebibliography}
\end{document}